\numberwithin{equation}{section}
\theoremstyle{plain}
\newtheorem{theorem}{Theorem}[section]
\newtheorem*{theorem*}{Theorem}
\newtheorem*{prop*}{Proposition}
\newtheorem{lemma}[theorem]{Lemma}
\newtheorem*{lemma*}{Lemma}
\newtheorem*{cor*}{Corollary}
\theoremstyle{definition}
\newtheorem*{defn*}{Definition}
\newtheorem*{remark*}{Remark}
\newcommand{\Hilb}{\mathcal H}
\newcommand{\N}{\mathbf N}
\newcommand{\Q}{\mathbf Q}
\newcommand{\R}{\mathbf R}
\newcommand{\ses}[5]{
  \xymatrix{
    0 \ar[r] & #1 \ar[r]^{#4} & #2 \ar[r]^{#5} & #3 \ar[r] & 0
  }
}
\newcommand{\Z}{\mathbf Z}
\title[Stone-von Neumann-Mackey Theorem]{An Easy Proof of the\\Stone-von Neumann-Mackey Theorem}
\author{Amritanshu Prasad}
\address{\href{http://www.imsc.res.in/}{The Institute of Mathematical Sciences}\\Chennai.}
\urladdr{http://www.imsc.res.in/~amri}
\email{amri@imsc.res.in}
\keywords{Stone-von Neumann, Mackey, Heisenberg group, Fourier transform, locally compact abelian group}
\subjclass[2000]{43-01}
\begin{document}
\begin{abstract}
  The Stone-von Neumann-Mackey Theorem for Heisenberg groups associated to locally compact abelian groups is proved using the Peter-Weyl theorem and the theory of Fourier transforms for $\R^n$.
  A theorem of Pontryagin and van Kampen on the structure of locally compact abelian groups (which is evident in any particular case) is assumed.
\end{abstract}
\maketitle
\section{Introduction}
The definition of a Heisenberg group, which is motivated by Heisenberg's commutation relations for position and momentum operators, goes back to Hermann Weyl's mathematical formulation of quantum kinematics \cite{Weyl50}.
The basic feature of Heisenberg groups, now known as the Stone-von Neumann theorem, was proved for Heisenberg groups associated to real vector spaces by Marshall Stone \cite{Stone30} and John von Neumann \cite{vN31}.
George W. Mackey \cite{Mackey49} extended this result to Heisenberg groups associated to all locally compact abelian groups, allowing for groups that arise in number theory with consequences that were first studied by Andr\'e Weil \cite{MR0165033}.

Our proof of Mackey's version of the Stone-von Neumann theorem uses relatively simple tools from functional analysis: the Peter-Weyl Theorem on representations of compact topological groups \cite[Theorem~1.12]{MR855239}, and the classical theory of Fourier transforms for real vector spaces, specifically, the Fourier Inversion Theorem and the Plancherel Theorem \cite[Chap.~9]{MR924157}.
Although we assume a theorem of Pontryagin and van Kampen on the structure of locally compact abelian groups (see \S~\ref{sec:locally-comp-abel}), the conclusion of this theorem is obvious for groups that typically come up in applications.

Sections~\ref{sec:locally-comp-abel} and \ref{sec:example-locally-comp} are introductory in nature.
Since they are only meant to convey certain general ideas, some details are omitted.
In \S~\ref{sec:stone-von-neumann}, after stating the Stone-von Neumann-Mackey theorem, we give a new proof (based on the Peter-Weyl Theorem) for locally compact abelian groups with compact open subgroups.
We then recall von Neumann's original proof in the real case (which uses the theory of Fourier transforms for real vector spaces).
Finally, we explain how to deduce the theorem for groups which are the product of an abelian group with a compact open subgroup and a real vector space.
\section{Locally Compact Abelian Groups, Pontryagin Duality}
\label{sec:locally-comp-abel}
Let $L$ be a locally compact (Hausdorff) abelian group.
Its Pontryagin dual is the group $\hat L$ of continuous homomorphisms $L\to T$, where $T$ denotes the circle group $\R/\Z$.
When equipped with the compact open topology, $\hat L$ is also a locally compact abelian group.

Two fundamental properties of such groups are closely interrelated:
\begin{theorem*}
  [Pontryagin-van~Kampen Fundamental Structure Theorem]
  Every locally compact abelian group is isomorphic to $E\times \R^n$ for some locally compact abelian group $E$ which has a compact open subgroup and a positive integer $n$.
\end{theorem*}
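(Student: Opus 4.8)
The plan is to reduce the assertion to the classical structure theorem for \emph{compactly generated} locally compact abelian groups and then to bootstrap from an open subgroup to all of $L$. The first move is to produce an open, compactly generated subgroup: fixing a compact symmetric neighbourhood $U$ of $0$ in $L$, the subgroup $H=\bigcup_{n\ge 1}(U+\dots+U)$ ($n$ summands) contains $U$ and is therefore open — hence also closed — in $L$, and it is generated by the compact set $U$. One now invokes the structure theorem in the compactly generated case: $H\cong\R^a\times\Z^b\times K$ with $K$ compact. In the usual proof of this one first finds a compact subgroup $K_0\le H$ for which $H/K_0$ has no small subgroups, then uses the fact that a locally compact abelian group without small subgroups is a Lie group — so $H/K_0\cong\R^a\times T^c\times(\text{discrete})$, the abelian and comparatively soft case of Hilbert's fifth problem — and finally reassembles $H$, splitting off the free part $\Z^b$ (because $\Z^b$ is free) and the vector part $\R^a$ (because $\R^a$ is injective, as used again below), and absorbing $K_0$ together with $T^c$ into a single compact factor $K$.

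To globalise, let $V\cong\R^a$ be the vector factor of $H$, regarded as a subgroup of $L$; it is closed in $L$, being the kernel of the projection $H\to\Z^b\times K$ (hence closed in $H$) and $H$ being closed in $L$. The quotient map $\pi\colon L\to L/V$ is open and carries the open subgroup $H$ onto the open subgroup $H/V\cong\Z^b\times K$, which contains the compact open subgroup $\{0\}\times K$; hence $E:=L/V$ has a compact open subgroup. It remains only to split the short exact sequence $0\to V\to L\to E\to 0$, and this follows because $\R^a$ is an injective object in the category of locally compact abelian groups (a standard fact, ultimately resting on Pontryagin duality and the divisibility of $\R$): the inclusion $V\hookrightarrow L$ then admits a retraction, so $L\cong V\times E\cong\R^a\times E$ with $E$ having a compact open subgroup, as required. (If $a=0$, i.e.\ if $L$ already has a compact open subgroup, one takes $E=L$; the insistence on a strictly positive $n$ in the statement is immaterial.)

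The only genuinely hard input is the structure theorem for compactly generated groups — that is the step I expect to be the main obstacle — together with the injectivity of $\R$, which also accounts for each of the splittings above; both are standard in the literature (Hewitt--Ross, \emph{Abstract Harmonic Analysis}~I, \S9, or Morris's monograph on Pontryagin duality). Everything else is routine manipulation with open subgroups and quotients, and, as noted, for the concrete groups of interest the entire decomposition can be read off by inspection, which is why the paper simply assumes it.
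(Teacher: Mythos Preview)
The paper does not prove this theorem at all: it is explicitly \emph{assumed}, with the reader referred to Morris's monograph for a proof and for its relation to Pontryagin duality (see the paragraph immediately following the two boxed theorems in \S\ref{sec:locally-comp-abel}). So there is no ``paper's own proof'' to compare your proposal against; you seem already to have noticed this in your final sentence.

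That said, your outline is the standard route and is essentially what one finds in Hewitt--Ross or Morris: pass to an open compactly generated subgroup, invoke the structure theorem there to exhibit a copy of $\R^a$, and then split it off from $L$. One caution worth flagging in the context of \emph{this} paper: you justify the splitting by citing the injectivity of $\R$ in the category of locally compact abelian groups and say this ``ultimately rests on Pontryagin duality''. But the paper goes in the opposite direction---it deduces Pontryagin duality \emph{from} the Fundamental Structure Theorem---so if you were supplying a proof meant to slot into the paper's logical flow, that appeal would be circular. The fix is easy (the splitting of a closed vector subgroup can be established directly, without full duality, via a local cross-section and the one-parameter-subgroup/exponential argument already implicit in your ``no small subgroups'' step), but you should say so rather than invoke duality.
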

\begin{theorem*}
  [Pontryagin Duality Theorem]
  The map $\phi:L\to \hat{\hat L}$ defined by $\phi(x)(\chi)=\chi(x)$ is an isomorphism of topological groups.
\end{theorem*}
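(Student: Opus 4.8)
The plan is to establish the theorem by direct computation (or from tools already at our disposal) for three classes of groups --- the real vector spaces $\R^n$, the discrete abelian groups, and the compact abelian groups --- and then to assemble the general statement using the Pontryagin--van~Kampen structure theorem stated above. Two general facts will be used freely: $\phi_L$ is continuous, which follows from the joint continuity of the evaluation map $L\times\hat L\to T$; and $\widehat{A\times B}\cong\hat A\times\hat B$ as topological groups, under which $\phi_{A\times B}$ corresponds to $\phi_A\times\phi_B$.

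First I would treat $L=\R^n$: the Fourier Inversion and Plancherel theorems identify $\widehat{\R^n}$ with $\R^n$ through $y\mapsto\bigl(x\mapsto\langle x,y\rangle\bmod\Z\bigr)$, and under this identification $\phi$ is, up to an automorphism of $\R^n$, the identity, hence an isomorphism of topological groups. Next, for a finitely generated discrete group $\Z^a\times F$ with $F$ finite, one computes duals directly from $\widehat\Z\cong T$, $\widehat{\Z/n}\cong\Z/n$, and compatibility with products, and checks that $\phi$ is an isomorphism. For an arbitrary discrete group $D$ I would write $D=\varinjlim F$ as the direct limit of its finitely generated subgroups; then $\hat D\cong\varprojlim\hat F$ --- a continuous character of $D$ is nothing but a compatible family of characters of the $F$ --- and the transition maps are surjective because the divisible group $T$ is injective as an abelian group, so dualizing once more returns the direct limit $\varinjlim F=D$, the composite being $\phi_D$.

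The compact case is the heart of the argument. Let $K$ be compact abelian. Since $K$ is abelian its irreducible unitary representations are one-dimensional, so the Peter--Weyl theorem says precisely that the characters of $K$ separate its points, whence $\phi_K\colon K\to\hat{\hat K}$ is injective. As $\phi_K$ is continuous and $K$ compact, its image is a closed subgroup of $\hat{\hat K}$ (compact, being the dual of the discrete group $\hat K$) and $\phi_K$ is a homeomorphism onto that image. Were the image proper, the nontrivial compact group $\hat{\hat K}/\phi_K(K)$ would carry a nontrivial character by Peter--Weyl, that is, there would be a nonzero element of $\widehat{\hat{\hat K}}$ vanishing on $\phi_K(K)$; but $\hat K$ is discrete, so the discrete case already proved identifies $\widehat{\hat{\hat K}}$ with $\hat K$, and such an element is a character $\chi\in\hat K$ with $\chi(x)=\phi_K(x)(\chi)=0$ for all $x\in K$, forcing $\chi=0$ --- a contradiction. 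Hence $\phi_K$ is onto, so an isomorphism of topological groups.

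Finally I would glue the pieces. If $E$ has a compact open subgroup $K$, then in $0\to K\to E\to E/K\to 0$ the group $K$ is compact and $E/K$ is discrete, and dualizing yields $0\to\widehat{E/K}\to\hat E\to\hat K\to 0$, still exact: restriction $\hat E\to\hat K$ is surjective because $T$ is injective and, $K$ being open, any set-theoretic extension of a continuous character of $K$ is automatically continuous. Here $\widehat{E/K}$ is a compact open subgroup of $\hat E$, so dualizing a second time and comparing with the original sequence via the natural maps $\phi$ gives a commutative ladder to which the five lemma applies: $\phi_K$ and $\phi_{E/K}$ being isomorphisms forces $\phi_E$ to be an isomorphism of abstract groups, and it is a homeomorphism since it carries the compact open subgroup $K$ onto $\widehat{\hat K}$ by the isomorphism $\phi_K$. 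For general $L$ the structure theorem gives $L\cong E\times\R^n$ with $E$ as above, and the cases of $E$ and of $\R^n$ together with compatibility of $\phi$ with products conclude the proof. The step I expect to be the real obstacle is the compact case: promoting the easy injectivity of $\phi_K$ to surjectivity needs a genuine idea, and it is exactly there that the discrete case and a second appeal to Peter--Weyl have to be fed back in.
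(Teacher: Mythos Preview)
The paper does not actually prove this theorem; it offers only the one-sentence sketch following the statement, together with the hints scattered through \S\ref{sec:example-locally-comp} (Peter--Weyl gives the compact/discrete duality, whence duality for groups with a compact open subgroup, and the $\R^n$ case is ``elementary''; the Structure Theorem then assembles the general case). Your proposal is a faithful and essentially correct fleshing-out of precisely that outline --- discrete and compact via Peter--Weyl (with the bootstrapping you describe), the five-lemma passage to groups with a compact open subgroup, and the product decomposition to finish --- so the approaches coincide. One small remark: identifying $\widehat{\R^n}$ with $\R^n$ and checking that $\phi_{\R^n}$ is an isomorphism needs nothing as heavy as Fourier Inversion or Plancherel; it is the elementary fact that every continuous character of $\R^n$ is $x\mapsto xy+\Z$ for a unique $y$, which is why the paper calls this case elementary.
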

Indeed, one can deduce the Pontryagin Duality Theorem for groups which have a compact open subgroup from the Peter-Weyl theorem (see \S~\ref{sec:groups-with-compact} for a further hint on this), and combining this with the duality theorem for $\R^n$ (which is elementary), one may deduce the Pontryagin Duality Theorem for all locally compact abelian groups from the Fundamental Structure Theorem.
The relationship in the other direction is more subtle, and the reader is referred to \cite{MR0442141}.
\section{Examples of Locally Compact Abelian Groups}
\label{sec:example-locally-comp}
\subsection{The groups $\R^n$}
\label{sec:reals}
This is the classical example where the Stone-von Neumann theorem was first proved \cite{Stone30,vN31}.
For each $y\in \R^n$, define $\chi_y:\R^n\to T$ by 
\begin{equation*}
  \chi_y(x)=x y+\Z
\end{equation*}
where $x y$ denotes the scalar product of $x$ and $y$.
Then $y\mapsto\chi_y$ is an isomorphism $\R\to \hat \R$.
\subsection{Discrete Abelian Groups}
\label{sec:discrete}
The dual of a discrete abelian group is compact.
Indeed, if $A$ is discrete, the topology on $\hat A$ is the that of point-wise convergence.
Thus $\hat A$ can be thought of as a closed subgroup of $T^A$ (with the product topology) from which it inherits a compact topology.

An interesting example is the Pr\"ufer $p$-group $\Z(p^\infty)$, which (for each prime $p$) is defined as the union in $T$ of the subgroups
\begin{equation*}
  T[p^n]=\{x\in T: p^n x=0\},
\end{equation*}
and is given the discrete topology (with the subspace topology coming from $T$, $\Z(p^\infty)$ would not be locally compact).
A character is completely determined by the images of $p^{-n}$, $n\in \N$.
If for each $n$, $p^{-n}\mapsto a_n p^{-n}$, then we may think of $a_n$ as an element of $\Z/p^n\Z$.
Thus, the Pontryagin dual of the Pr\"ufer group is the subgroup of $\prod_{n\in \N}\Z/p^n\Z$ consisting on sequences $(a_n)$ such that the image of $a_n$ in $\Z/p^{n-1}\Z$ is $a_{n-1}$.
In addition to being added, the sequences $a_n$ can also be multiplied element-wise.
The result is the ring $\Z_p$ of $p$-adic integers.
\subsection{Compact Abelian Groups}
\label{sec:compact}
The dual of a compact abelian group is discrete.
To see this, firstly observe that if $G$ is a compact abelian group, then the image of any non-zero continuous character $\chi:G\to T$ is a non-trivial compact subgroup of $T$, therefore it is either all of $T$, or the cyclic subgroup of $n$-torsion points for some positive integer $n$.
In any case, it can never be contained in the image of $(-1/4,1/4)\subset \R$ in $T$.
Since the topology on $\hat G$ is that of uniform convergence, it follows that $0$ is an isolated point in $\hat G$, so $\hat G$ is discrete.

It is a consequence of the Peter-Weyl Theorem that Pontryagin duality gives rise to a contravariant isomorphism between the category of discrete abelian groups and the category of compact abelian groups.
\subsection{Groups with compact open subgroups}
\label{sec:groups-with-compact}
If a locally compact abelian group $E$ has a compact open subgroup $G$, then the quotient $A=E/G$ is discrete.
Let $G^\perp$ be the subgroup of $\hat E$ consisting of homomorphisms $E\to T$ which vanish on $G$.
Then $G^\perp$ is isomorphic to the Pontryagin dual of $A$ and is therefore compact.
On the other hand, the quotient $\hat E/G^\perp$ is identified with $\hat G$, and is therefore a discrete abelian group.
Thus $G^\perp$ is a compact open subgroup of $\hat E$.
The Duality Theorem for groups with compact open subgroups is a consequence of the duality between discrete and compact groups.
A basic example here is the field $\Q_p$ of $p$-adic numbers, which is the field of fractions of $\Z_p$.
It is topologized in such a way that $\Z_p$ is a compact open subgroup.
The discrete quotient $\Q_p/\Z_p$ is isomorphic to the Pr\"ufer $p$-group $\Z(p^\infty)$.
\subsection{The ad\`eles of $\Q$}
\label{sec:adeles}
Recall that the ring of ad\`eles of $\Q$ is $\mathbf A=\mathbf A_f \times \R$, where $\mathbf A_f$ is the ring of finite ad\`eles.
The factor $\mathbf A_f$ has a compact open subgroup isomorphic to $G=\prod_p \Z_p$ (a product over all primes) such that the quotient $\mathbf A_f/G$ is isomorphic to $\oplus_p \Z(p^\infty)$.
\section{Stone-von Neumann-Mackey Theorem}
\label{sec:stone-von-neumann}
\subsection{Statement of the Theorem}
\label{sec:statement-theorem}
Let $L$ be a locally compact abelian group.
Consider the following unitary operators on $L^2(L)$:
\begin{tabbing}
  Translation Operators:\hspace{1cm}\=$T_x f(u)=f(u-x)$ for $x\in L$,\\
  Modulation Operators: \>$M_\chi f(u)=e^{2\pi i\chi(u)}f(u)$ for $\chi \in \hat L$.
\end{tabbing}
These operators satisfy the commutation relation
\begin{equation}
  \label{eq:commutation}
  [T_x,M_\chi]=e^{-2\pi i \chi(x)}\mathrm{Id}_{L^2(L)}
\end{equation}
It follows that
\begin{equation*}
  H:=\big\{ e^{2\pi it}T_x M_\chi:t\in T,\; x\in L,\; \chi\in \hat L\big\}
\end{equation*}
is a subgroup of the group of unitary operators on $L^2(L)$. This group $H$ is called the Heisenberg group associated to $L$.
The resulting representation of $H$ on $L^2(L)$ is called the canonical representation of $H$.
\begin{theorem*}
  [Stone-von Neumann-Mackey]
  \begin{enumerate}
  \item The Hilbert space $L^2(L)$ has no non-trivial proper closed subspace invariant under $H$.
  \item If $\Hilb$ is a Hilbert space and $\rho:H\to U(\Hilb)$ is a continuous homomorphism\footnote{$U(\Hilb)$ is given the strong operator topology.} such that $\rho(e^{2\pi it})=e^{2\pi it}\mathrm{Id}_\Hilb$, there exists an orthogonal sum decomposition of Hilbert spaces
    \begin{equation*}
      \Hilb=\bigoplus_{\alpha} \Hilb^\alpha
    \end{equation*}
    such that for each $\alpha$ there exists, up to scaling, a unique isometry $W_\alpha:L^2(L)\to \Hilb^\alpha$ satisfying $W_\alpha(g f)= \rho(g)W_\alpha f$ for all $g\in H$ and $f\in L^2(L)$.
  \end{enumerate}
\end{theorem*}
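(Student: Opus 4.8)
The plan is to apply the Fundamental Structure Theorem to write $L\cong E\times\R^n$ with $E$ admitting a compact open subgroup, to prove the theorem separately for $E$ and for $\R^n$, and then to recombine the two. For $E$, fix a compact open subgroup $G\subseteq E$; by \S\ref{sec:groups-with-compact} its annihilator $G^\perp\subseteq\hat E$ is also compact open, and $(G^\perp)^\perp=G$. The commutation relation \eqref{eq:commutation} shows that $T_x$ and $M_\chi$ commute whenever $x\in G$ and $\chi\in G^\perp$, so the operators $e^{2\pi it}T_xM_\chi$ with $t\in T$, $x\in G$, $\chi\in G^\perp$ form a \emph{compact abelian} subgroup $K\subseteq H$. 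This is where the Peter--Weyl Theorem enters: any continuous unitary representation of $K$ is an orthogonal direct sum of one-dimensional subrepresentations, and in a representation on which $e^{2\pi it}$ acts by $e^{2\pi it}$ only the characters of $K$ restricting to the tautological character of $T$ can occur. Using $\widehat G\cong\hat E/G^\perp$ and $\widehat{G^\perp}\cong E/G$, such characters are parametrised by $\widehat G\times(E/G)$, and a direct computation identifies the corresponding isotypic subspace of $L^2(E)$ with the one-dimensional space spanned by a character of $E$ supported on a single coset of $G$; moreover $H$ permutes these lines simply transitively modulo its centre.

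Part (1) for $E$ is then immediate: a nonzero closed $H$-invariant subspace, being $K$-invariant, is a closed sum of some of these lines, and $H$-invariance together with transitivity forces it to be all of $L^2(E)$. For part (2), let $\rho\colon H\to U(\Hilb)$ be as in the statement, let $\psi_0$ be the character whose line in $L^2(E)$ is spanned by $\mathbf 1_G$, and let $\{v_\alpha\}$ be an orthonormal basis of the $\psi_0$-isotypic subspace $\Hilb_{\psi_0}$ of $\Hilb$. Define $W_\alpha$ on the $H$-orbit of $\mathbf 1_G$ by $W_\alpha(g\cdot\mathbf 1_G)=\rho(g)v_\alpha$. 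The heart of the argument is to show this is well defined and isometric, which comes down to the single identity $\langle g\cdot\mathbf 1_G,\mathbf 1_G\rangle_{L^2(E)}=\langle\rho(g)v_\alpha,v_\alpha\rangle_{\Hilb}$ for all $g\in H$: after normalising Haar measure so that $\|\mathbf 1_G\|^2=1$, both sides equal $e^{2\pi it}$ when $g=e^{2\pi it}T_xM_\chi$ lies in $K$, and both vanish otherwise — the right-hand side because $\rho(g)v_\alpha$ then lies in a $K$-isotypic subspace orthogonal to $\Hilb_{\psi_0}$, conjugation by $g\notin K$ moving the character $\psi_0$. Since the orbit of $\mathbf 1_G$ spans $L^2(E)$ by part (1), each $W_\alpha$ extends to an $H$-equivariant isometry; the subspaces $\Hilb^\alpha=W_\alpha(L^2(E))$ are mutually orthogonal by the same computation, their closed span is $H$-invariant and meets every $K$-isotypic subspace of $\Hilb$ so equals $\Hilb$, and uniqueness of $W_\alpha$ up to a scalar follows from Schur's Lemma because $L^2(E)$ is irreducible.

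For $L=\R^n$ I will recall von Neumann's argument, which is where the Fourier Inversion and Plancherel Theorems are used: the rank-one orthogonal projection of $L^2(\R^n)$ onto the line spanned by the Gaussian $\phi_0(u)=2^{n/4}e^{-\pi|u|^2}$ can be written as a Gaussian-weighted integral $\int_{\R^n\times\R^n}c(x,y)\,T_xM_{\chi_y}\,dx\,dy$, and the same integral $P=\int c(x,y)\,\rho(T_xM_{\chi_y})\,dx\,dy$ in an arbitrary $\Hilb$ is shown, via those two theorems, to be a nonzero orthogonal projection satisfying $P\rho(g)P\in\C\,P$. One then repeats the orbit construction of the previous paragraph with $\phi_0$ in place of $\mathbf 1_G$ and an orthonormal basis of $P\Hilb$ in place of $\{v_\alpha\}$, the role of the coset-supported characters now being played by the coherent states $\rho_0(g)\phi_0$, which are total in $L^2(\R^n)$. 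Finally, since $L\cong E\times\R^n$ realises $H$ as generated by the commuting subgroups $H_E$ and $H_{\R^n}$ (with centres identified) and $L^2(L)=L^2(E)\otimes L^2(\R^n)$, part (1) for $L$ follows from the two cases because a tensor product of irreducibles for commuting groups is irreducible, and for part (2) one restricts a given representation to $H_E$ to get $\Hilb\cong L^2(E)\otimes\mathcal M$ with $H_{\R^n}$ acting on the multiplicity space $\mathcal M$, and then applies the $\R^n$ case to decompose $\mathcal M$.

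The main obstacle I anticipate is the case of $E$: one must choose the compact subgroup $K$ so that it is abelian and yet large enough that its isotypic lines in $L^2(E)$ are one-dimensional and permuted transitively by $H$, and then push through the inner-product identity above that makes the intertwiners $W_\alpha$ well defined. Once that is in place, the $\R^n$ case is von Neumann's classical Gaussian computation and the passage from $E$ and $\R^n$ to $L$ is routine bookkeeping with tensor products of commuting representations.
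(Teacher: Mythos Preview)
Your proposal is correct and follows essentially the paper's route: Peter--Weyl for the compact abelian group generated by $G$ and $G^\perp$, von Neumann's Gaussian projector for $\R^n$, then a tensor combination; the paper phrases the $E$-case via the abstract product $G\times G^\perp$ rather than your subgroup $K\cong T\times G\times G^\perp$, proves the intertwiner is isometric by exhibiting orthonormal bases in distinct eigenspaces rather than via your single inner-product identity, and recombines the two cases by an explicit eigenspace decomposition rather than multiplicity-space language, but the content is the same. One small slip: $H$ permutes the isotypic lines simply transitively modulo $K$, not modulo its centre $T$ (elements of $G\times G^\perp$ stabilise each line since they act on it by a scalar), though only transitivity is actually needed.
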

\subsection{The proof for groups with compact open subgroups}
\label{sec:proof-groups-with}
Let $G$ be a compact open subgroup of $E$ and $A=E/G$.
As explained in \S~\ref{sec:groups-with-compact}, $G^\perp$ is a compact open subgroup of $\hat E$ which is isomorphic to $\hat A$.
We have short exact sequences
\begin{gather*}
  \ses G E A j q\\
  \ses {\hat A}{\hat E}{\hat G}{\hat q}{\hat j}
\end{gather*}
Let $\rho$ be as in the second part of the Stone-von Neumann-Mackey Theorem.
The commutation relation (\ref{eq:commutation}) implies that the operators $T_g$ and $M_\xi$ commute for all $g\in G$ and $\xi\in G^\perp$.
Thus $\rho$ is a continuous unitary representation of the abelian group $G\times G^\perp$ on $\Hilb$.
By the Peter-Weyl theorem
\begin{equation*}
  \Hilb=\oplus_{(\eta,a) \in \hat G\times A} \Hilb_{\eta,a},
\end{equation*}
where
\begin{equation*}
  \Hilb_{\eta,a}= \big\{v\in \Hilb: \rho(T_g M_\xi) v = e^{2\pi i(\eta(g)+\xi(a))} v\text{ for all } g\in G, \xi \in G^\perp\big\}.
\end{equation*}
The hypothesis that $\rho(e^{2\pi i t})=e^{2\pi i t}\mathrm{Id}_\Hilb$ implies that the commutation relation (\ref{eq:commutation}) is inherited by the operators $\rho(T_x)$ and $\rho(M_\chi)$.
Therefore, for any $(x,\chi)\in E\times \hat E$,
\begin{eqnarray*}
  \rho(T_g M_\xi) \rho(T_x M_\chi) & = & e^{2\pi i \xi(x)} \rho(T_g) \rho(T_x) \rho(M_\xi) \rho(M_\chi)\\
  & = & e^{2\pi i \xi(x)}\rho(T_x) \rho(T_g) \rho(M_\chi) \rho(M_\xi)\\
  & = & e^{2\pi i(\xi(x)-\chi(g))}\rho(T_x M_\chi) \rho(T_g M_\xi).
\end{eqnarray*}
Strictly speaking, $\chi(g)=\chi(j(g))$, which is the same as $\hat j(\chi)(g)$.
It follows that if $v\in \Hilb_{\eta,a}$, then $\rho(T_x M_\chi) v\in \Hilb_{\eta-\hat j(\chi), a+q(x)}$.
Thus $H$ permutes the subspaces $\Hilb_{\eta,a}$ transitively.
In particular, they are all of the same dimension, and unless $\Hilb=0$, each $\Hilb_{\eta,a}\neq 0$.

Now apply the above considerations to the canonical representation of $H$ on $L^2(E)$.
Suppose $f\in L^2(E)_{0,0}$.
Then $f$ is invariant under translations in $G$.
We claim that $f$ vanishes almost everywhere outside $G$.
Indeed, if $x\notin G$, there exists $\xi\in G^\perp$ such that $\xi(x)\neq 0$.
By continuity, $\xi$ does not vanish anywhere in an open neighborhood of $x$.
Since $M_\xi f(x)=e^{2\pi i \xi(x)}f(x)$ coincides with $f(x)$ almost everywhere, $f$ must vanish almost everywhere in this neighborhood.
By varying $x$, one concludes that $f$ vanishes almost everywhere outside $G$, as claimed.
Therefore $f$ must be a scalar multiple of the characteristic function $\delta_G$ of $G$.
It follows that $L^2(E)_{\eta,a}$ is one-dimensional for all $(\eta,a)\in \hat G\times A$.

Let $M$ be a non-trivial subspace of $L^2(E)$ which is invariant under $H$.
The Peter-Weyl theorem applied to the representation of $G\times G^\perp$ on $M$ implies that $M=\oplus_{\eta,a} M_{\eta,a}$, where $M_{\eta,a}=M\cap L^2(E)_{\eta,a}$.
Thus $M_{\eta,a}\neq 0$ for at least one, and hence all $(\eta,a)\in \hat G\times A$.
Since each $L^2(E)_{\eta,a}$ is itself one dimensional, we have $M=L^2(E)$, proving the first part of the theorem.

To prove the second part, let $\{v_\alpha\}$ be an orthonormal basis of $\Hilb_{0,0}$ and let $\Hilb^{\alpha}$ be the closure of the subspace of $\Hilb$ spanned by $\rho(g)v_\alpha$ as $g$ varies over $H$.
Clearly, each $\Hilb^\alpha$ is an invariant subspace of $\Hilb$.
The orthogonal complement $N$ of $\oplus_\alpha \Hilb^\alpha$ is also an invariant subspace of $\Hilb$.
Since $N_{0,0}=N\cap \Hilb_{0,0}$ is trivial, $N$ must also be trivial.
Thus $\Hilb=\oplus \Hilb^\alpha$.

Now $W_\alpha:L^2(E)\to \Hilb^\alpha$, if it exists, must take $L^2(E)_{0,0}$ to $\Hilb^\alpha_{0,0}$.
After scaling, we may assume it takes $\delta_G$ to $v_\alpha$, and therefore $T_x M_\chi \delta_G$ to $\rho(T_x M_\chi)v_\alpha$.
When $(x,\chi)$ runs over a set of coset representatives of $G\times G^\perp$ in $E\times \hat E$, $T_x M_\chi \delta_G$ and $\rho(T_x M_\chi) v_\alpha$ run over orthonormal bases of $L^2(E)$ and $\Hilb^\alpha$ respectively (these vectors are pairwise orthogonal because they belong to distinct eigenspaces for the action of $G\times G^\perp$).
Therefore, the function taking $T_x M_\chi \delta_G$ to $\rho(T_x M_\chi) v_\alpha$ extends to an isometry of Hilbert spaces from $L^2(E)$ onto $\Hilb^\alpha$.
This completes the proof of the Stone-von Neumann-Mackey Theorem when $E$ has a compact open subgroup.
\subsection{The proof for $\R^n$}
\label{sec:proof-Rn}
We follow Folland's exposition \cite{MR983366} of von Neumann's original proof.
Identify $\R^n$ with its Pontryagin dual using the map $y\mapsto \chi_y$ as in \S~\ref{sec:reals} and write $M_y$ for the modulation operator $M_{\chi_y}$.
\subsubsection*{The Weyl operators}
For $k=(x,y)\in \R^{2n}$, let $W_k=e^{\pi i x y}T_x M_y$.
These unitary operators (called Weyl operators) satisfy the identities
\begin{equation*}
  W_k^*=W_{-k},\quad W_kW_l=e^{\pi i \omega(k,l)}W_{k+1} \text{ for all } k,l\in \R^{2n}.
\end{equation*}
Here $\omega(k,l)$ is the standard symplectic form on $\R^{2n}$; if $k=(x,y)$ and $l=(u,v)$ ($x,y,u,v\in \R^n$), then
\begin{equation*}
  \omega(k,l)=y u-xv.
\end{equation*}
\subsubsection*{The Fourier-Wigner transform}
Given $f,g\in L^2(\R^n)$, their Fourier-Wigner transform is defined as
\begin{equation*}
  V(f,g)(k)=\langle W_k f,g\rangle.
\end{equation*}
The Cauchy-Schwartz inequality implies that $V(f,g)$ is bounded above by $\|f\|\|g\|$, and the strong continuity of $k\mapsto W_k$ implies that $V(f,g)$ is a continuous function of $k$.
We calculate
\begin{eqnarray*}
  V(f,g)(k)&=&\int_{\R^n} e^{2\pi i y(u-x/2)}f(u-x)\overline{g(u)}d u \\
  &=&\int_{\R^n}e^{2\pi i y u}f(u-x/2)\overline{g(u+x/2)} d u.
\end{eqnarray*}
Taking $F(x,y)=f(x)\overline{g(y)}$, we see that $V(f,g)$ is a Fourier transform in one variable of a function obtained from $F$ by the volume-preserving change of variables $(u,x)\mapsto (u-x/2,u+x/2)$, and therefore extends to an isometry of $L^2(\R^{2n})$ onto $L^2(\R^{2n})$.
Therefore,
\begin{equation}
  \label{eq:1}
  \langle V(f,g),V(\phi,\psi)\rangle = \langle f\otimes \overline g, \phi\otimes \overline \psi \rangle = \langle f,\phi\rangle \langle \overline g,\overline \psi\rangle
\end{equation}
\subsubsection*{Proof of the first part}
We can now deduce the first part of the Stone-von Neumann Theorem as follows: let $M$ be a non-trivial subspace of $L^2(\R^{2n})$ which is invariant under the Heisenberg group.
Let $f$ be a non-zero vector in $M$.
If $g\perp M$, then $g\perp W_k f$ for all $k\in \R^{2n}$.
Consequently, $V(f,g)\equiv 0$.
By (\ref{eq:1}), $\|f\|\|g\|=\|V(f,g)\|=0$, so $g=0$ in $L^2(\R^{2n})$.
It follows that $M=L^2(\R^{2n})$.
\subsubsection*{Idea behind the proof of the second part}
The orthogonal projection onto the line spanned by the Gaussian in $L^2(\R^n)$ is identified with an operator $W_\Phi$ constructed from the action of the Heisenberg group on $L^2(\R^n)$.
The corresponding operator $\rho(W_\Phi)$ on $\Hilb$ is also a projection operator, and its range $N$ is a space whose unit vectors the Gaussian can be mapped to (this should be seen as analogous to $\Hilb_{0,0}$ being the space to whose unit vectors $\delta_G$ can be mapped to).
The remaining steps correspond to those in \S~\ref{sec:proof-groups-with}.
\subsubsection*{Proof of the second part}
For $\Phi\in L^1(\R^{2n})$, the formula
\begin{equation*}
  \langle \rho(W_\Phi)v,w\rangle =\int_{\R^{2n}} \Phi(k)\langle \rho(W_k) v,w\rangle d k
\end{equation*}
gives rise to a well-defined bounded operator $\rho(W_\Phi):\Hilb\to \Hilb$.
We abbreviate the above definition and write
\begin{equation}
  \label{eq:3}
  \rho(W_\Phi)=\int \Phi(k)\rho(W_k) d k.
\end{equation}
One should think of $\rho(W_\Phi)$ as an averaging-out of the action of $\rho$ using $\Phi$ as a density.
Formally, when $\Phi$ is the delta function at $k$, $\rho(W_\Phi)$ is just $\rho(W_k)$.
A formal calculation yields
\begin{eqnarray*}
  \rho(W_\Phi)\circ \rho(W_\Psi) & = & \int \Phi(k)\rho(W_k) \int \Psi(l)\rho(W_l) d l d k\\
  & = & \int \int \Phi(k)\Psi(l)e^{\pi i \omega(k,l)}\rho(W_{k+l}) d l d k\\
  & = & \int \int \Phi(k-l)\Psi(l)e^{\pi i \omega(k-l,l)}d l \rho(W_k)d k.
\end{eqnarray*}
The twisted convolution
\begin{equation*}
  \Phi\#\Psi(k)=\int e^{\pi i \omega(k-l,l)}\Phi(k-l)\Psi(l)d l
\end{equation*}
makes sense for $\Phi$ and $\Psi$ in $L^1(\R^{2n})$ and $\|\Phi\#\Psi\|_1\leq \|\Phi\|_1\|\Psi\|_1$.
With this in mind, the identity $\rho(W_\Phi)\circ \rho(W_\Psi)=\rho(W_{\Phi\#\Psi})$ suggested by the above formal calculation can be justified using Fubini's theorem.
Also, it is not difficult to see that $\rho(W_{\Phi})^*=\rho(W_{\Phi^*})$, where $\Phi^*(k)=\overline{\Phi(-k)}$.
\begin{lemma}
  \label{lemma:faithful}
  For $\Hilb\neq 0$ and $\Phi\in L^1(\R^{2n})$, if  $\rho(W_\Phi)=0$ then $\Phi=0$ almost everywhere. 
\end{lemma}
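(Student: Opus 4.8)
The plan is to reduce the statement to the canonical representation $\lambda$ of $H$ on $L^2(\R^n)$ — where the Fourier--Wigner transform disposes of it immediately — and then to transfer it to an arbitrary $\rho$ by exploiting the ``matrix unit'' behaviour of twisted convolution. First I would note that, directly from (\ref{eq:3}) and the definition of $V$, $\langle\lambda(W_\Phi)f,g\rangle=\int_{\R^{2n}}\Phi(k)V(f,g)(k)\,dk$ for $f,g\in L^2(\R^n)$. Hence if $\lambda(W_\Phi)=0$ then $\Phi$ annihilates, in the $L^1$--$L^\infty$ pairing, every $V(f,g)$ with $f,g\in\mathcal S(\R^n)$. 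But $V(f,g)$ is, up to the volume-preserving linear change of variables $(x,u)\mapsto(u-x/2,u+x/2)$, a partial Fourier transform of $f\otimes\bar g$; since $\mathcal S(\R^n)\otimes\mathcal S(\R^n)$ is dense in $\mathcal S(\R^{2n})$ and both that change of variables and the partial Fourier transform are topological automorphisms of $\mathcal S(\R^{2n})$, the functions $V(f,g)$ span a dense subspace of $\mathcal S(\R^{2n})$. As $\Phi\in L^1(\R^{2n})$ is a continuous linear functional on $\mathcal S(\R^{2n})$, it vanishes identically, so $\Phi=0$ almost everywhere. This proves the lemma when $\rho=\lambda$.

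Next, for $\psi,\phi\in\mathcal S(\R^n)$ set $\Phi_{\psi,\phi}:=\overline{V(\psi,\phi)}\in\mathcal S(\R^{2n})\subset L^1(\R^{2n})$. Using the isometry identity (\ref{eq:1}) I would check that $\lambda(W_{\Phi_{\psi,\phi}})$ is the rank-one operator $f\mapsto\langle f,\psi\rangle\phi$ and that $\Phi_{\psi,\phi}^*=\Phi_{\phi,\psi}$; composing two such operators and invoking the faithfulness of $\lambda$ just established yields
\begin{equation*}
  \Phi_{\psi,\phi}\#\Phi_{\psi',\phi'}=\langle\phi',\psi\rangle\,\Phi_{\psi',\phi}\qquad\text{in }L^1(\R^{2n}).
\end{equation*}
Now $\Psi\mapsto\rho(W_\Psi)$ is a contraction from $L^1(\R^{2n})$ into the bounded operators on $\Hilb$ (immediate from (\ref{eq:3}) and unitarity of the $\rho(W_k)$), the span of $\{\Phi_{\psi,\phi}\}$ is dense in $L^1(\R^{2n})$ (it is the conjugate of the span of the $V(\psi,\phi)$, which is dense in $\mathcal S(\R^{2n})$ hence in $L^1(\R^{2n})$), and $\rho(W_{u_\epsilon})\to\mathrm{Id}_\Hilb$ in the strong operator topology for an approximate identity $(u_\epsilon)$ in $L^1(\R^{2n})$ (by strong continuity of $k\mapsto\rho(W_k)$ and $\rho(W_0)=\mathrm{Id}_\Hilb$). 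Since $\Hilb\neq0$, these three facts force $\rho(W_{\Phi_{\psi_1,\phi_1}})\neq0$ for some $\psi_1,\phi_1\in\mathcal S(\R^n)$.

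Finally, suppose $\rho(W_\Phi)=0$ and fix $\psi,\phi\in\mathcal S(\R^n)$. Multiplying on the left by $\rho(W_{\Phi_{\psi,\phi_1}})$ and on the right by $\rho(W_{\Phi_{\psi_1,\phi}})$ and using $\rho(W_A)\circ\rho(W_B)=\rho(W_{A\#B})$ gives $\rho\bigl(W_{\Phi_{\psi,\phi_1}\#\Phi\#\Phi_{\psi_1,\phi}}\bigr)=0$. On the other hand, the rank-one formula of the previous paragraph computes $\lambda\bigl(W_{\Phi_{\psi,\phi_1}\#\Phi\#\Phi_{\psi_1,\phi}}\bigr)=\langle\lambda(W_\Phi)\phi,\psi\rangle\,\lambda(W_{\Phi_{\psi_1,\phi_1}})$, so by faithfulness of $\lambda$ we have $\Phi_{\psi,\phi_1}\#\Phi\#\Phi_{\psi_1,\phi}=\langle\lambda(W_\Phi)\phi,\psi\rangle\,\Phi_{\psi_1,\phi_1}$ in $L^1(\R^{2n})$. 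Applying $\Phi\mapsto\rho(W_\Phi)$ to this identity gives $\langle\lambda(W_\Phi)\phi,\psi\rangle\,\rho(W_{\Phi_{\psi_1,\phi_1}})=0$, and since $\rho(W_{\Phi_{\psi_1,\phi_1}})\neq0$ we conclude $\langle\lambda(W_\Phi)\phi,\psi\rangle=0$. As $\psi,\phi$ range over the dense subspace $\mathcal S(\R^n)$ of $L^2(\R^n)$ and $\lambda(W_\Phi)$ is bounded, $\lambda(W_\Phi)=0$, and the first step gives $\Phi=0$ almost everywhere.

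The step I expect to be the crux is producing the test function $\Phi_{\psi_1,\phi_1}$ that $\rho$ does not kill: the only hypothesis available there is $\Hilb\neq0$, and converting it into a concrete non-vanishing statement is exactly what the approximation $\rho(W_{u_\epsilon})\to\mathrm{Id}_\Hilb$ together with the $L^1$-density of the rank-one functions achieves. The rest is bookkeeping with the twisted-convolution relations, although verifying the rank-one description of $\lambda(W_{\Phi_{\psi,\phi}})$ and the $\#$-identities it yields from (\ref{eq:1}) does require a little care.
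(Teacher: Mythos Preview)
Your argument is correct, but it takes a substantially longer and more structural route than the paper. The paper's proof is a four-line direct computation that works for \emph{any} $\rho$ at once: one conjugates $\rho(W_\Phi)$ by $\rho(W_k)$, observes that for fixed $u,v\in\Hilb$ the matrix coefficient
\[
k\;\longmapsto\;\langle \rho(W_k)\rho(W_\Phi)\rho(W_{-k})u,v\rangle
\;=\;\int \Phi(l)\,e^{2\pi i\,\omega(k,l)}\,\langle \rho(W_l)u,v\rangle\,dl
\]
is a classical Fourier transform of $\Psi(l)=\Phi(l)\langle\rho(W_l)u,v\rangle$, and applies Fourier inversion. Choosing $v=\rho(W_k)u$ makes the second factor equal to $1$ at $l=k$, hence nonzero nearby, so $\Phi$ vanishes a.e.\ near each $k$. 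This uses nothing beyond the Fourier inversion theorem and the strong continuity of $k\mapsto\rho(W_k)$.

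Your approach instead proves faithfulness first for the canonical representation via Schwartz-space density of the Fourier--Wigner range, then builds the ``matrix-unit'' calculus $\Phi_{\psi,\phi}\#\Phi_{\psi',\phi'}=\langle\phi',\psi\rangle\Phi_{\psi',\phi}$ in the twisted convolution algebra, and finally bootstraps to a general $\rho$ using an approximate identity to locate a nonzero $\rho(W_{\Phi_{\psi_1,\phi_1}})$. This is sound, and it has the virtue of exposing the simple-algebra structure of $(L^1(\R^{2n}),\#)$ and anticipating Lemma~\ref{lemma:projection} (which you essentially reprove along the way). The price is considerably more machinery---density in $\mathcal S$, approximate identities, and a two-stage transfer---where the paper needs only a single conjugation and Fourier inversion.
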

\begin{proof}
  For any $u,v\in \Hilb$,
  \begin{eqnarray*}
    0 & = & \langle \rho(W_k)\rho(W_\Phi)\rho(W_{-k})u,v\rangle\\
    & = & \int \Phi(k)\langle \rho(W_k)\rho(W_l)\rho(W_{-k})u,v\rangle d l\\
    & = & \int \Phi(k)e^{2\pi i \omega(k,l)} \langle \rho(W_l)u,v\rangle d l\\
    & = & \hat \Psi(\tilde k),
  \end{eqnarray*}
  where $\tilde k\in \R^{2n}$ is such that $\omega(k,l)=\tilde k l$ for all $l\in \R^{2n}$ and $\Psi(l)=\Phi(l)\langle \rho(W_l)u,v\rangle$.
  It follows from the Fourier inversion theorem that $\Psi(l)=0$ almost everywhere.
  By taking $u$ to be a unit vector and $v=\rho(W_k)u$, one sees that $\Phi(l)=0$ almost everywhere in a neighborhood of $k$.
  By varying $k$, it follows that $\Phi(l)=0$ almost everywhere.
\end{proof}

When $\rho$ is the canonical representation, we denote $\rho(W_\Phi)$ simply by $W_\Phi$.
\begin{lemma}
  \label{lemma:projection}
  If $\Phi=\overline{V(\phi,\psi)}$ and $\Phi\in L^1(\R^{2n})$ then $W_\Phi f = \langle f,\phi\rangle \psi$.
\end{lemma}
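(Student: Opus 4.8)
The plan is to unwind the definition of $W_\Phi$ and recognize the resulting scalar integral as the $L^2(\R^{2n})$ inner product of two Fourier--Wigner transforms, so that identity (\ref{eq:1}) does essentially all the work. First I would fix an arbitrary $g\in L^2(\R^n)$ and compute, from (\ref{eq:3}) specialized to the canonical representation,
\begin{equation*}
  \langle W_\Phi f,g\rangle=\int_{\R^{2n}}\Phi(k)\,\langle W_k f,g\rangle\,dk=\int_{\R^{2n}}\Phi(k)\,V(f,g)(k)\,dk .
\end{equation*}
Because $\Phi=\overline{V(\phi,\psi)}$ and the Fourier--Wigner transform is an isometry into $L^2(\R^{2n})$, the function $\Phi$ lies in $L^2(\R^{2n})$ as well as in the hypothesized $L^1(\R^{2n})$, while $V(f,g)$ is bounded and square-integrable; hence the displayed integral converges absolutely and equals $\langle V(f,g),V(\phi,\psi)\rangle$ computed in $L^2(\R^{2n})$.

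Next I would apply (\ref{eq:1}) to obtain
\begin{equation*}
  \langle W_\Phi f,g\rangle=\langle V(f,g),V(\phi,\psi)\rangle=\langle f,\phi\rangle\,\langle\overline g,\overline\psi\rangle=\langle f,\phi\rangle\,\langle\psi,g\rangle ,
\end{equation*}
the last equality being the elementary identity $\langle\overline g,\overline\psi\rangle=\langle\psi,g\rangle$ for the inner product on $L^2(\R^n)$. Since $g$ was arbitrary and two vectors of a Hilbert space pairing identically against every vector must coincide, this yields $W_\Phi f=\langle f,\phi\rangle\psi$, which is the assertion.

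The argument is a single line once (\ref{eq:1}) is in hand, so the one place I expect to have to be careful — and the main (minor) obstacle — is the identification of the defining integral, which a priori is an $L^1$-against-$L^\infty$ pairing of $\Phi$ with the bounded continuous function $V(f,g)$, with the $L^2$ inner product of $V(f,g)$ and $V(\phi,\psi)$ to which (\ref{eq:1}) refers. This is legitimate because $\Phi\in L^1\cap L^2$ and $V(f,g)\in L^\infty\cap L^2$, so both readings are evaluated by one and the same absolutely convergent integral; it seems worth spelling out, since $\Phi\in L^1$ is exactly the hypothesis needed to make $W_\Phi$ meaningful via (\ref{eq:3}), while the automatic membership $\Phi\in L^2$ is what connects it to (\ref{eq:1}).
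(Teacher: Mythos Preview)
Your proposal is correct and is essentially the paper's own proof: both compute $\langle W_\Phi f,g\rangle$ for arbitrary $g$, recognize the integral as $\langle V(f,g),V(\phi,\psi)\rangle$, and invoke (\ref{eq:1}). You are simply more explicit than the paper in justifying why the $L^1$-against-$L^\infty$ defining integral coincides with the $L^2$ inner product and in writing out the step $\langle\overline g,\overline\psi\rangle=\langle\psi,g\rangle$.
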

\begin{proof}
  Indeed, using (\ref{eq:1}), for any $g\in L^2(\R^n)$,
  \begin{eqnarray*}
    \langle W_\Phi f, g\rangle & = &  \int \overline{V(\phi,\psi)(k)} \langle W_k f, g\rangle\\
    & = & \langle V(f,g),V(\phi,\psi)\rangle\\
    & = & \langle f, \phi \rangle \langle \psi, g\rangle
  \end{eqnarray*}
  from which the result follows.
\end{proof}
Thus if $\psi$ is a unit vector and $\phi=\psi$, then $W_\Phi$ is the orthogonal projection onto the line spanned by $\psi$.
In particular, it is idempotent and self-adjoint.
Consequently, $W_{\Phi\#\Phi}=W_\Phi\circ W_\Phi=W_\Phi$.
By Lemma~\ref{lemma:faithful}, it follows that $\Phi\#\Phi=\Phi$.
Similarly, since $W_\Phi^*=W_\Phi$, $\Phi^*=\Phi$.
Therefore, for any $\rho$ as in the second part of the Stone-von Neumann-Mackey Theorem, $\rho(W_\Phi)^2=\rho(W_{\Phi\#\Phi})=\rho(W_\Phi)$ and $\rho(W_\Phi^*)=\rho(W_{\Phi^*})=\rho(W_\Phi)$.
Therefore $\rho(W_\Phi)$ is an orthogonal projection in $\Hilb$.

For example, take both $\psi$ and $\phi$ to be the normalised Gaussian:
\begin{equation*}
  \phi(u)=2^{n/4}e^{-\pi u^2}
\end{equation*}
then writing $k=(x,y)$, we have
\begin{eqnarray*}
  \Phi(k) & = & \langle W_k \phi,\phi \rangle\\
  & = & 2^{n/2} \int e^{2\pi i y(u-x/2)-\pi (u-x/2)^2-\pi u^2}d u\\
  & = & e^{-\pi x^2/2}\int e^{-\pi v^2+2\pi i2^{-1/2}y v}d v
\end{eqnarray*}
where $v=2^{1/2}u-2^{-1/2}x$.
This integral is precisely the Fourier transform of the Gaussian evaluated at $2^{-1/2}y$.
Since the Gaussian is its own Fourier transform, we have
\begin{equation*}
  \Phi(k)=e^{-\pi(x^2+y^2)/2}.
\end{equation*}
In particular, $\Phi(k)\in L^1(\R^{2n})$.

Let $N$ be the range of the projection operator $\rho(W_\Phi)$.
Let $\{v_\alpha\}$ be an orthonormal basis of $N$.
Let $H^\alpha$ be the closed subspace of $\Hilb$ spanned by $\rho(W_k)v_\alpha$ as $k$ varies over $\R^{2n}$.
Clearly, each $\Hilb^\alpha$ is an invariant subspace for $\Hilb$.
Furthermore, for $u,v\in N$, since $\rho(W_\Phi)u=u$ and $\rho(W_\Phi)v=v$,
\begin{eqnarray*}
  \langle \rho(W_k)u, \rho(W_l)v\rangle & = & \langle \rho(W_k)\rho(W_\Phi)u, \rho(W_l)\rho(W_\Phi)v\rangle\\
  & = & e^{\pi i\omega(k,l)}\langle \rho(W_\Phi)\rho(W_{k+l})\rho(W_\Phi)u,v\rangle.
\end{eqnarray*}

To proceed further, we need to evaluate an expression which is of the form $\rho(W_\Phi)\rho(W_k)\rho(W_\Phi)$.
Observe that
\begin{eqnarray*}
  \rho(W_k)\rho(W_\Phi) & = & \int \Phi(l) \rho(W_k) \rho(W_l) d l\\
  & = & \int \Phi(l)e^{\pi i \omega(k,l)}\rho(W_{k+l})d l\\
  & = & \int \Phi(l-k)e^{\pi i\omega(k,l-k)}\rho(W_l)d l\\
  & = & \rho(W_{\Phi^k}),
\end{eqnarray*}
where $\Phi^k(l)=e^{\pi i\omega(k,l-k)}\Phi(l-k)$.
In particular, for the canonical representation, we have that $W_kW_\Phi=W_{\Phi^k}$.
On the other hand,
using Lemma~\ref{lemma:projection}, we find that
\begin{eqnarray*}
  W_\Phi W_k W_\Phi f & = & \langle f, \phi \rangle \langle W_k \phi,\phi\rangle \phi\\
  & = &  W_{\langle W_k\phi,\phi\rangle \Phi}f.
\end{eqnarray*}
It follows that $\Phi\#\Phi^k=\langle W_k\phi,\phi\rangle\Phi$.
Therefore
\begin{equation*}
  \rho(W_\Phi)\rho(W_k)\rho(W_\Phi)=\rho(W_{\Phi\#\Phi^k})=\langle W_k\phi,\phi\rangle\rho(W_\Phi).
\end{equation*}
Returning to the calculation of $\langle \rho(W_k)u,\rho(W_l)v\rangle$, we may conclude that
\begin{equation}
\label{eq:2}
  \langle \rho(W_k)u,\rho(W_l)v\rangle = e^{\pi i \omega(k,l)}\langle W_k\phi,\phi\rangle\langle u,v\rangle 
\end{equation}
If $\alpha\neq \beta$, then $v_\alpha \perp v_\beta$, and therefore by (\ref{eq:2}) $\Hilb^\alpha \perp \Hilb^\beta$.

Thus $\oplus_\alpha \Hilb^\alpha$ is an invariant orthogonal sum of Hilbert spaces.
Its orthogonal complement $M=(\oplus_\alpha \Hilb^\alpha)^\perp$ is also invariant.
Therefore, $\rho(W_\Phi)M \subset N\cap M$, which is trivial.
By Lemma~\ref{lemma:faithful}, it follows that $M=0$.

We have no choice but to define $W_\alpha$ by
\begin{equation*}
  W_\alpha(W_k \phi)=\rho(W_k)v_\alpha.
\end{equation*}
By (\ref{eq:2}),
\begin{equation*}
  \langle W_k\phi, W_l\phi\rangle = e^{\pi i \omega(k,l)}\langle W_k\phi,\phi\rangle = \langle \rho(W_k)v_\alpha,\rho(W_l)v_\alpha\rangle.
\end{equation*}
Therefore, $W_\alpha$ does indeed extend to an isometry of $L^2(\R^n)$ onto $H^\alpha$.
\subsection{The general case}
By the Fundamental Structure Theorem, we have $L=E\times \R^n$ for some locally compact abelian group $E$ with a compact open subgroup $G$ and some non-negative integer $n$.
The Heisenberg group $H$ of $L$ contains the Heisenberg groups $H_E$ and $H_{\R^n}$ of $E$ and $\R^n$ respectively (these subgroups commute with each other and share the centre).
Moreover, $L^2(L)=L^2(E)\otimes L^2(\R^n)$ (a Hilbert space tensor product).
Recall from \S~\ref{sec:proof-groups-with} that $L^2(E)$ is a sum of one-dimensional eigenspaces with distinct eigencharacters:
\begin{equation*}
  L^2(E)=\oplus_{(\eta,a)\in \hat G\times A} L^2(E)_{\eta,a},
\end{equation*}
Therefore,
\begin{equation*}
  L^2(L)=\oplus_{\eta,a}\big( L^2(E)_{\eta,a}\otimes L^2(\R^n)\big).
\end{equation*}
Since these subspaces all have different eigencharacters, any invariant subspace for the Heisenberg group $H_E$ of $E$ must be a sum of its intersections with these subspaces, which (as we have seen in \S~\ref{sec:proof-groups-with}) are permuted by $H_E$.
But each such subspace, as a representation of $H_{\R^n}$ is equivalent to $L^2(\R^n)$, so there are no non-trivial proper invariant subspaces.

Finally, if $\Hilb$ is a Hilbert space and $\rho$ is as in the second part of the statement of the Stone-von Neumann-Mackey Theorem, then restricting attention to the action of $H_E$,
\begin{equation*}
  \Hilb = \oplus_{\eta,a} \Hilb_{\eta,a}
\end{equation*}
as in \S~\ref{sec:proof-groups-with}, and each $\Hilb_{\eta,a}$ is invariant under $H_{\R^n}$ (since it commutes with $H_E$).
By the Stone-von Neumann theorem (real case),
\begin{equation*}
  \Hilb_{0,0}=\oplus_\alpha H^\alpha_{0,0},
\end{equation*}
in such a way that there exists an isometry $W^\alpha_{0,0}:L^2(\R^n)\to H^\alpha_{0,0}$ intertwining the actions of the Heisenberg group $H_{\R^n}$.
Let $\Hilb^\alpha$ be the closed linear span of the spaces $T_x M_\chi\Hilb^\alpha_{0,0}$ as $x$ and $\chi$ range over $E$ and $\hat E$.
Imitating the method of \S~\ref{sec:proof-groups-with} one may see that $\Hilb=\oplus_\alpha\Hilb^\alpha$.
Furthermore, if $v_\alpha=W^\alpha_{0,0}(\phi)$ (where $\phi\in L^2(\R^n)$ is the normalized Gaussian), then $W_\alpha:L^2(E\times \R^n)\to \Hilb^\alpha$ defined by $W_\alpha(T_x M_\chi(\delta_G\otimes \phi))=\rho(T_x M_\chi)v_\alpha$ for all $(x,\chi)\in L\times \hat L$ can be shown to be the necessary intertwining isometries, completing the proof of the Stone-von Neumann-Mackey Theorem.
\subsection*{Remark}
The proof given by von~Neumann in the real case carries over to the general case when the normalized Gaussian $\phi$ is replaced by $\delta_G\otimes \phi$.
Thus, the Stone-von Neumann-Mackey Theorem follows from the Fourier Inversion Theorem and Plancherel Theorem for locally compact abelian groups.
\subsection*{Acknowledgements}
We thank Tejas Kalelkar, Vipul Naik and Yashonidhi Pandey for their comments on a draft of this manuscript.

\end{document}